\newcommand{\br}{\mathbf R}
\newcommand{\bn}{\mathbf N}
\newcommand{\Cal}{\mathcal}
\newcommand{\id}{\operatorname{Id}}
\newcommand{\im}{\operatorname{Im}}
\newcommand{\ls}{\lesssim}
\newcommand{\mn}[1]{\Vert#1\Vert}
\newcommand{\mnorm}[1]{\Vert#1\Vert}
\newcommand{\ol}{\overline}
\newcommand{\re}{\operatorname{Re}}
\newcommand{\set}[1]{\left\{\,#1\,\right\}}
\newcommand{\wt}{\widetilde}
\numberwithin{equation}{section}
\newcommand{\bu}{{\bf u}}
\newcommand{\bm}{{\bf m}}
\newcommand{\bR}{\mathbb R}
\newcommand{\bM}{{\mathbb M}}
\begin{document}


\textwidth 155 mm
\textheight 253 mm

\baselineskip 20pt 
\lineskip 2pt
\lineskiplimit 2pt
\flushbottom

\textheight 253 mm 
\textwidth 155 mm

\title[Symmetric Preparation]{Symmetric Preparation of Systems}
\author[NILS DENCKER]{\protect\normalsize{\sc Nils Dencker\\ \\Lund University}}
\address{Department of Mathematics, University of Lund, Box 118,
S-221 00 Lund, Sweden}
\email{nils.dencker@gmail.com}
\maketitle

\section{Introduction and statement of results}\label{intro}

If $ f $ is  $ C^\infty $ satisfying $  \partial_t^m f(0,0)  \ne 0 $ and $ \partial_t^j f(0,0) = 0$ for $ j < m $, then by the  Malgrange preparation theorem there exist  $ C^\infty $ functions $ c(t,x) $ and $ r(t,x) = \sum_{j =0}^{m-1} t^j r_j(x) $ such that $ c(0,0) \ne 0 $, $ r_j(0)= 0,\ \forall \, j $  and
\begin{equation}\label{Wprep}
	f(t,x) = c(t,x) \left(t^m + r(t,x)\right ) \qquad (t,x) \in \br \times \br^n
\end{equation}
locally near $(0,0)$. In the analytic case, the Weierstrass preparation theorem, one has uniqueness but not in the $ C^\infty $  case, except for $ m = 1 $ and real valued $ r(t,x) $ when one can use the implicit function theorem. 
There are also corresponding division results.

The Malgrange preparation theorem has been generalized to  $ N \times N $ systems of functions in  \cite{de:prep}. An example is for smooth  $ N \times N $ system $ F(t,x) $ satisfying  
\begin{equation}\label{Fcond}
	| \partial_t^m F(0,0) | > 0 \quad \text{and} \quad \partial_t^j F(0,0) = 0, \quad j < m
\end{equation}
 Then there exist  smooth  $ N \times N $ systems $ C(t,x) $ and $ M_j(x) $,  $j < m$,  such that $ | C(0,0) | \ne 0 $, $ M_j(0)= 0 $, $\forall \, j$, and
\begin{equation}\label{Wprepsys}
	F(t,x) = C(t,x)\left(t^m\id_N + \sum_{j =0}^{m-1} t^j M_j(x) \right ) \qquad (t,x) \in \br \times \br^n
\end{equation}
locally near $(0,0)$, this is the special case of \cite[Th.\ 5.3]{de:prep}. In the analytic case, one has uniqueness but not in the smooth  case.

We also proved the following division theorem in \cite{de:prep}.
If $ F(t,x) $ is a smooth  $ N \times N $ system satisfying~\eqref{Fcond}, then for any smooth  $ N \times N $ system $ G(t,x) $ there exist smooth  $ N \times N $ systems $ C(t,x) $ and $ R_j(x) $, $j < m$, such that
\begin{equation}\label{Wdiv}
	G(t,x) = C(t,x) F(t,x) + \sum_{j =0}^{m-1} t^j R_j(x) \qquad (t,x) \in \br \times \br^n
\end{equation}
locally near $(0,0)$, this is a special case of \cite[Th.\ 5.9]{de:prep}. As before, one has uniqueness in the analytic case but not in the smooth  case. By taking the adjoints, we also obtain adjoint preparation results and division results, see \cite{de:prep}.

Now, the question about symmetric preparation results was raised by Gregory Berkolaiko \cite{GB}. The simplest case is for smooth (or analytic) symmetric $ N \times N $ matrices $ F(t,x) $  satisfying $  \partial_t F(0,0) > 0 $ and $ F(0,0) = 0$. Then the question was: does it exist smooth (or analytic) $ N \times N $ matrices  $ U(t,x) $ and $ M(x) $, such that  $| U(0,0) | \ne  0$,  $ M(x) $ is symmetric with $ M(0) = 0$, satisfying
\begin{equation}\label{sprepconj}
	F(t,x) = U(t,x)\left(t \id_N + M(x)\right )U^*(t,x) \qquad (t,x) \in \br \times \br^n
\end{equation}
near $ (0,0) $, and is this preparation unique? Observe that the conditions on $ F(t,x) $ are necessary for~\eqref{sprepconj} to hold since  $ U(0,0)U^*(0,0) = \partial_t F(0,0) > 0 $ and $ F(0,0) = U(0,0)M(0)U^*(0,0) $.
Here and in the following we say that the matrix $ A $ is symmetric if $ A^* = A $ and antisymmetric if $ A^* = -A $, where $ A^* = \ol A^{t} $, and when we say that $A > 0$ then we assume that $A$ is symmetric.
This result was proved in \cite{de:symprep}, and in this paper we shall generalize to the higher order case. The following is the main result of the paper.

\begin{thm}\label{anasymprep}
Assume that $ F(t,x) $ is a smooth symmetric $ N \times N $ matrix valued function of  $ (t,x) \in \br \times \br^n$ satisfying
\begin{equation}\label{Fcond0}
\partial_t^m F(0,0) > 0 \quad \text{and} \quad \partial_t^j F(0,0) = 0, \quad j < m
\end{equation}
then there exists a neighborhood $ \omega $ of\/ $ (0,0)$ and  smooth  $ N \times N $ matrix valued functions $ U(t,x)  $ and\/  $ M_j(x)  $, $j < m$, such that $ | U(0,0) | \ne 0 $, $ M_j(x) $ is symmetric such that $ M_j(0) = 0,\ \forall \, j, $ and
\begin{equation}\label{sprepres}
F(t,x) = U(t,x)\left(t^m \id_N + \sum_{j=0}^{m-1} t^j  M_j(x)\right )U^*(t,x) \quad \text{in $ \omega $}
\end{equation}
If $ F(t,x) $ is analytic then we obtain \eqref{sprepres} with analytic $ U(t,x) $ and $ \bM (t,x) $. One can choose  $ U > 0$ and then the preparation is unique.
\end{thm}

\begin{rem}\label{Rem1.1}
	Observe that the preparation~\eqref{sprepres} is not unique in general. For example, if $ U(t,x) $ and  $\bM(t,x) $ are solutions to~\eqref{sprepres}, then  for any orthogonal $ A(x) = (A^{-1})^{*}(x)$ we find that $\wt U(t,x) = U(t,x)A(x) $ and $\wt \bM(t,x) = A^{*}(x) \bM(t,x) A(x) $ solves~\eqref{sprepres}. But if $U(t,x) > 0$ and $UA(t,x) > 0$ then  $A(t,x) \equiv \id_N$.
\end{rem}

In fact, if $U > 0$ then $U$ has a ON base of eigenvectors, and if $UA$ is symmetric then $A^*U = UA$ which gives $U = AUA$. Then $A$ preserves the eigenspace with the largest eigenvalue $\mn{U}$ and $A^2 = \id$ on this space. By iteration over decreasing eigenvalues of $U$ we find $A^2 = \id_N$. Then $A = A^{-1} = A^*$ has eigenvalues $\pm 1$, and if  $ UA > 0$ there cannot be any negative  eigenvalues, thus $A = \id_{N}$.

Now \eqref{sprepres} gives that $\partial_t^j F(0,0) = 0 $ for $ j < m $ and $ m!U(0,0)U^*(0,0) = \partial_t^m F(0,0) > 0 $, so the conditions are necessary. If $\partial_t^j F(0,0)  \ne 0$ for $ j < m $ then by subtracting suitable terms we obtain the following result.

\begin{cor}\label{anasymcor}
Assume that $ F(t,x) $ is a smooth symmetric  $ N \times N $ matrix valued function of  $ (t,x) \in \br \times \br^n$ satisfying
\begin{equation}\label{Fcondcor}
\partial_t^m F(0,0) > 0
\end{equation}
Then for any smooth symmetric  $ N \times N $ system $ \bR(t,x) $ such that $ \partial_t^j \bR(0,0) = \partial_t^j F(0,0) $, $ j < m $, and $ \partial_t^m \bR(0,0) < \partial_t^j F(0,0)$, there exists a neighborhood $ \omega $ of\/ $ (0,0)$ and  smooth  $ N \times N $ matrix valued functions $ U(t,x)  $ and $ \bM(t,x) $ such that $ | U(0,0) | \ne 0 $,  $ \bM (t,x) = \sum_{j=0}^{m-1} t^j  M_j(x)$ is symmetric such that  $ M_j(0) = 0,\ \forall \, j  $,
and
\begin{equation}\label{spreprescor}
F(t,x) = U(t,x)\left(t^m \id_N + \bM(t,x)\right )U^*(t,x) + \bR(t,x)  \qquad \text{in $ \omega $}
\end{equation}
If $ F(t,x) $ is analytic then we obtain \eqref{spreprescor} with analytic $ U(t,x) $, $ \bM(t,x) $ and $ \bR(t,x) $.  One can choose  $ U > 0$ and then the preparation is unique.
\end{cor}

Note that the uniqueness does not hold in general, since Remark~\ref{Rem1.1} gives counterexamples for nonsymmetric $ U $.
Since \eqref{sprepres} is not linear in $ U(t,x) $ the proof approaches of Weierstrass or Malgrange preparation results do not work in the symmetric case, instead we shall use inverse function theorems.
Theorem \ref{anasymprep} will be proved in Section \ref{analytic} for the analytic case, and in Section \ref{smoothc} for the smooth  case. But first we shall consider the formal power series case.

\section{Formal power series solutions}\label{formal}

We shall assume that  $ F(t,x) $  is given by a formal power series expansion:
\begin{equation}
	F(t,x) =  \sum_{{j, k}}  F_{j,\alpha}t^jx^k
\end{equation}
where $F_{j,0} = \partial_t^jF(0,0)/j! = 0$ for $ j < m $ and $  F_{m,0}  =  \partial_t^m F(0,0)/m!  > 0 $. 
We shall take the formal power series expansions $ U(t,x)   =   \sum_{{j,k}}  U_{\substack{j,k}} t^j x^k$ where $| U_{0,0}| \ne 0 $ and  
\begin{equation}\label{Mexp}
	\bM(t,x)   =   \sum_{j < m} t^j \bM_j(x)  = \sum_{\substack{j < m\\k > 0}} M_{j,n}t^j x^k
\end{equation} 
is symmetric such that $ \bM_{j}(0) = M_{j,0} = 0 $, $ j < m $. 

Then \eqref{sprepres} gives 
\begin{equation}\label{Fexp}
	\sum_{j, k} F_{j, k}  t^j x^k
	= \sum_{j, k}U_{j, k} t^j x^k \Big( t^{m}\id_N  + \sum_{\substack{j,k}} M_{j,k}t^j x^k \Big)	
	\sum_{{j,k}} U_{j,k}^* t^j x^k
\end{equation}
and by matching coefficients we obtain that
\begin{equation}\label{Fjkexp}
	F_{j,k} = \sum_{i, \ell} U_{i, \ell } U_{j - m - i,k - \ell }^* + \sum_{\substack{i,\ell, p, q}} U_{i, \ell } M_{p,q}U_{j - p -i,k - q -\ell}^*
\end{equation}
where we sum over the nonnegative index. The first terms of this expansion are $ F_{0,0} = U_{0,0}M_{0,0}U_{0,0}^* = 0 $,  $F_{0,1} = U_{0,0}M_{0,1}U_{0,0}^*$, $F_{j,0} = 0$, $ j < m $, and  $ F_{m,0} = U_{0,0}  U_{0,0}^* $. Here we can choose $U_{0,0} =  ( F_{m,0})^{1/2} $ which is the unique positive definite symmetric solution. In fact, since $U_{0,0}$ is invertible we can write any other positive symmetric solution as $U_{0,0}A$ and then $(U_{0,0}A)^* = A^*U_{0,0}$. Since $(U_{0,0}A)^2 = U_{0,0}^2$, $A$ has to be orthogonal. But then Remark~\ref{Rem1.1} gives that $A = \id_N $. Thus, we get a unique  $U_{0,0} > 0 $.

When $ j < m $ we obtain from \eqref{Fjkexp} that
\begin{equation}\label{mexpk}
	U_{0,0}M_{j, k}U_{0,0}^* = F_{j,k} - \sum_{\substack{i + \ell < j + k}} U_{i, \ell } M_{p,q}U_{j - p -i,k - q -\ell}^*
\end{equation} 
since $ |U_{0,0} | \ne 0 $  we obtain symmetric $ M_{j,k}$ that is given by $ F_{j,k} $, $ U_{i,\ell} $ with $ i \le j $ and $ \ell \le k $, and by $ M_{p,q} $ with $p \le j$, $ q \le k $ and $ p + q < j + k $,
Thus  $ M_{j,k}$ is determined by $ F_{j,k} $, $ U_{i,\ell} $ with $ \ell < k$ or $\ell = k$ and $i <j$ and by $ M_{p,q} $ with $ q < k$ or $q = k$ and $p <j $.

With $ j $ replaced by $ j + m $, we also obtain from \eqref{Fjkexp} that 
\begin{equation}\label{jkrec}
	2\re U_{j,k} U_{0,0}^* = F_{j+m,k} - \sum_{\substack{0 < i + \ell < j + k}} U_{i, \ell} U_{j - i,k - \ell }^* 
	- \sum_{q > 0} U_{i, \ell } M_{p,q}U_{j + m - p - i,k - q - \ell}^*
\end{equation}
where the right hand side is given by $ F_{j+m,k} $, 
$ U_{i,\ell} $ with $ i \le j + m$ and  $ \ell < k $ or with $ i + \ell < j + k $, thus by $ \ell < k$ or $\ell = k$ and $i <j $, and  $ M_{p,q} $ with $ p < m $ and $q \le k $. Since $U_{0,0}$ is invertible, we can solve~\eqref{jkrec}.  In the case $U_{0,0} > 0 $, Lemma~\ref{symlemma}  gives a unique symmetric $U_{j,k}$.

We can now determine $ U_{j,k} $ and $ M_{j,k} $  from \eqref{mexpk} and \eqref{jkrec} by induction over $j \ge 0$ for fixed $ k \ge 0$ starting with $ U_{0,k} $ and $ M_{0,k} $ which we first determine by induction over $k$.
Since $M_{0,0} = 0$ we start with $U_{0,0}$ and $M_{0,1} = U_{0,0}^{-1} F_{0,1}( U_{0,0}^*)^{-1}$, and assume that $ F_{j,k}$ is known. We assume that $ U_{0,\ell} $ and $ M_{0,\ell + 1} $ is known for $\ell < k$. Then $ U_{0,k} $ is determined by these and then $ M_{0,k+1} $ is determined by $U_{0,\ell}$ and $ M_{0,\ell } $ for $ \ell \le k$. By induction we obtain $ U_{0,k} $ and $ M_{0,k+1} $ for any $k \ge 0$.

Then we use induction over $j \ge 0$ for $ U_{j,k} $ and $ M_{j,k+1} $ starting with $k = 0$. We assume that we have determined $ U_{i,\ell} $ and $ M_{i,\ell+1} $ when $\ell < k$ and when $\ell = k$ for $i < j$.  Then $ U_{j,k} $ is determined by these and we then obtain $ M_{j,k+1} $ from $U_{i,\ell}$  when $ \ell \le  k$ and from $ M_{i,\ell +1} $ when $ \ell < k$ and when  $\ell = k $ for $i < j$. By induction we obtain $ U_{j,k} $ and $ M_{j,k+1} $ for any $j $ and $k $. Observe that any finite expansion of $U(t,x)$ is positive definite if $| t | + | x | \ll 1$.

\begin{thm}\label{powerres}
	If $ F(t,x) $ is analytic (or formal power series) and symmetric such that $ \partial_t^m F(0,0) > 0 $ and $ \partial_t^jF(0,0) = 0$, $ \forall\, j < m $, then there exists formal power series $ U(t,x) $ and $ \bM(t,x) $ given by~\eqref{Mexp} solving \eqref{sprepres} modulo $ O(|t |^N + | x |^N) $, $ \forall \, N $.
	We get a unique symmetric power series solution in the case when $ U_{0,0} > 0$.
\end{thm}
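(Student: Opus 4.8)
The plan is to organise the recursion assembled above into a single induction, the work being almost entirely bookkeeping since the identities~\eqref{Fexp}, \eqref{Fjkexp}, \eqref{mexpk}, \eqref{jkrec} and~\eqref{j0rec} are already in hand. First fix the leading coefficient: as $F_{1,0}=\partial_t F(0,0)>0$ one may pick $U_{0,0}$ with $U_{0,0}U_{0,0}^{*}=F_{1,0}$, whence $|U_{0,0}|\ne 0$; in the symmetric case take $U_{0,0}$ to be the positive definite Hermitian square root of $F_{1,0}$, and note that $M_{1}=U_{0,0}^{-1}F_{0,1}(U_{0,0}^{-1})^{*}$ is forced and Hermitian. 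Then induct on the total degree $p=j+k$, with nothing to do at $p=0$. Having produced every $U_{j,k}$ with $j+k<p$ and every $M_{m}$ with $m<p$, all $M_{m}$ symmetric, carry out a secondary recursion over $r=0,1,\dots,p$: first read $U_{p,0}$ off~\eqref{j0rec}, then, assuming $U_{p-\ell,\ell}$ is known for $\ell\le r$, read $M_{r+1}$ off~\eqref{mexpk} and $U_{p-r-1,r+1}$ off~\eqref{jkrec}. Matching coefficients at all orders in~\eqref{Fexp} then yields~\eqref{sprepconj} modulo $O(|t|^{N}+|x|^{N})$ for every $N$.

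The content of the argument is to check that these formulas reference only previously constructed data. As observed in the discussion preceding the theorem, the right-hand side of~\eqref{jkrec} for $U_{j,k}$ involves only $F_{j+1,k}$, the $M_{m}$ with $m\le k$, and the $U_{i,\ell}$ with either $i+\ell<j+k$, or $i+\ell\le j+k$ together with $\ell\le k-1$; ordering the $U$-coefficients lexicographically by $(i+\ell,\ell)$, this is precisely ``strictly before $U_{j,k}$''. Similarly~\eqref{mexpk} expresses $M_{k}$ through $F_{0,k}$, the $U_{0,\ell}$ with $\ell\le k-1$, and the $M_{m}$ with $m<k$; when $k\le r+1\le p$ the former have total degree $\le k-1<p$ and the latter are produced earlier in the secondary recursion. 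A formal induction on the pair $(p,r)$ in the stated order makes this literal.

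Two further points make each step well posed. Since~\eqref{j0rec} and~\eqref{jkrec} only pin down the Hermitian part $2\re\big(U_{j,k}U_{0,0}^{*}\big)$, one needs their right-hand sides to be Hermitian matrices. This holds inductively: $F$ being symmetric, each coefficient $F_{j+1,k}$ is Hermitian, while in the coefficient of $t^{j+1}x^{k}$ of the identity~\eqref{Fexp} the contribution of the $t\,\id_{N}$ factor is a sum $\sum_{i+i'=j,\,\ell+\ell'=k}U_{i,\ell}U_{i',\ell'}^{*}$ invariant under the interchange $U_{i,\ell}\leftrightarrow U_{i',\ell'}$ and hence Hermitian, whose only terms involving $U_{j,k}$ are $U_{j,k}U_{0,0}^{*}$ and $U_{0,0}U_{j,k}^{*}$, i.e.\ $2\re\big(U_{j,k}U_{0,0}^{*}\big)$, whereas the contribution of the $M$-factor is, by the same interchange together with $M_{m}^{*}=M_{m}$, again Hermitian and contains no $U_{j,k}$; hence the remainder left on the right of~\eqref{jkrec} is Hermitian, and one concrete solution is $U_{j,k}=\tfrac12(\mathrm{RHS})(U_{0,0}^{-1})^{*}$. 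The right-hand side of~\eqref{mexpk} is Hermitian by the same argument, so $M_{k}=U_{0,0}^{-1}(\cdot)(U_{0,0}^{-1})^{*}$ comes out symmetric and the hypothesis on the $M_{m}$ is preserved. For uniqueness in the symmetric case one takes $U_{0,0}>0$, so that~\eqref{j0rec} and~\eqref{jkrec} read $U_{0,0}U_{j,k}+U_{j,k}U_{0,0}=(\text{Hermitian RHS})$, which by Lemma~\ref{symlemma} has a unique Hermitian solution; with $U_{0,0}$ so chosen every $U_{j,k}$, and through~\eqref{mexpk} every $M_{k}$, is then determined, which is the asserted uniqueness.

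I expect the only real obstacle to be keeping the double indexing straight: one must verify that no term on the right of~\eqref{jkrec}, whether appearing directly or through the $M_{m}$ it contains, has total degree $\ge p+1$, or total degree $p$ with second index exceeding $k$. With the ordering ``first by $i+\ell$, then by $\ell$'' fixed this is a routine check rather than a genuine difficulty. The positivity $F_{1,0}>0$ is used only to supply the square root $U_{0,0}$ and, for uniqueness, to make $X\mapsto U_{0,0}X+XU_{0,0}$ an isomorphism of the space of Hermitian matrices.
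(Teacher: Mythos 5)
Your proposal takes essentially the same route as the paper: matching coefficients in~\eqref{Fexp}, solving the recursions~\eqref{mexpk}, \eqref{jkrec} and~\eqref{j0rec} by induction over the total degree $p=j+k$ with a secondary recursion over the second index, and invoking Lemma~\ref{symlemma} (with $U_{0,0}>0$) for the unique symmetric solution. Your explicit check that the right-hand sides of~\eqref{jkrec} and~\eqref{mexpk} are Hermitian is a correct refinement of a point the paper leaves implicit, but it does not change the argument.
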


But is it not clear that in the analytic case, the formal power expansion converges to an analytic function. Instead we will use the inverse function theorem, see Section~\ref{analytic}.

\section{Proof of Theorem \ref{anasymprep} in the analytic case}\label{analytic}

It is clear that  $ N \times N $ analytic systems form a Banach space with the usual $ L^{\infty} $ norm, and they can be localized to neighborhoods. We shall use the Taylor expansion notation
$$ 
T_k(F)(t,x) = \sum_{j=0}^{k-1} t^j  \partial_j^jF(0,x)/j!
$$ 
and also the $ k $ times integrations $ I_k(F)(t,x) $ in $ t $  such that $ I_1(F)(t,x) = \int_{0}^{t} F(s,x)\, ds$ and 
$ I_{k+1}(F)(t,x) = I_1(I_k(F))(t,x) $. Thus we find $ \partial_t^j I_k(F)(0,x) \equiv 0$ when $ j < k $ and $ \partial_t^k I_k(F)(t,x) \equiv F(t,x)$. Observe that $ t^{-k}I_k(F)(t,x) $ is analytic or smooth if $ F(t,x) $ is.

We shall prove that if $ U(t,x) $ and $ \bM(t,x) $ are analytic such that  $ | U(0,0) | \ne 0$,  $ \bM (t,x) = \sum_{j=0}^{m-1} t^j  M_j(x)/j!$ is symmetric such that $ M_j(0) = 0,\ \forall \, j $, then the mapping
\begin{multline}\label{3.1}
(U(t,x), \bM(t,x)) \overset{\Cal F}{\longrightarrow} \\ \Big(\re \partial_t^m\big(U(t,x)(t^m \id_N + \bM(t,x))U^*(t,x)\big), T_m(U \bM U^*)(t,x))\Big)   \\ = (F_1(t,x), F_0(t,x))
\end{multline}
is analytic and surjective in some neighborhood of $ (0,0) $ for analytic and symmetric $ F_j $, $ j = 0,\, 1 $, satisfying $| F_1(0,0) | \ne 0$, $\partial_t^m F_0 \equiv 0 $ and $ F_0(t,0) \equiv 0 $. 
If $\Cal F(U,\bM) = (F_1,F_0)$ by~\eqref{3.1} in a neighborhood of $ (0,0)$ then we obtain~\eqref{sprepres} with 
\begin{equation}\label{Frecon}
F(t,x) = F_0(t,x) + I_m(F_1)(t,x) \qquad \text{near $ (0,0) $}
\end{equation}
since $ F_0(t,x) = T_mF(t,x) $ and $ F_1(t,x) = \partial_t^m F(t,x) $.

When $ \bM \equiv 0 $ we find that $ F_0 \equiv 0 $ and $ \partial_t^m(t^mUU^*) = m!UU^*= F_1 $ when $ t = 0 $ so $ U(t,x) $ must be invertible close to $ (0,0) $.  For any such  $ U(t,x) $, we find that the differential of $ \Cal F $ at $ (U,0) $ is given by
\begin{multline}\label{3.2}
({\bf u}(t,x), {\bm}(x)) \overset{d\Cal F}{\longrightarrow} \\ \Big(\partial_t^m\big(2t^m\re U(t,x){\bf u}^*(t,x) + U(t,x){\bm}(t,x) U^*(t,x) \big), T_m(U\bm U^*)(t,x)\Big) \\ = (A_1(t,x), A_0(t,x))
\end{multline}
where $ \bu(t,x) $ and $ \bm(t,x) $ are analytic such that $ \bm $ is symmetric, $\partial_t^m \bm \equiv 0$ and $ \bm(t,0) \equiv 0 $, which gives analytic symmetric $ A_j $,  $ j = 0,\, 1 $, such that $\partial_t^m A_0 \equiv 0$ and $ A_0(t,0)  \equiv 0$. 

We shall show that this differential is surjective for such systems in a neighborhood of~$ (0,0) $. We find from \eqref{3.2} that $ \bm = T_m(U^{-1}A_0 (U^{-1})^*) $ and
\begin{equation*}
2t^m\re U(t,x)\bu^*(t,x)  = I_m( A_1)(t,x) + \bR(t,x) - U\bm U^*(t,x)
\end{equation*}
for some analytic symmetric $ \bR(t,x) $ such that $\partial_t^m \bR \equiv 0$. 
Since the right hand side has to vanish of order $ m $ at $t = 0 $, we find that $ \bR(t,x) = T_m(U \bm U^*)(t,x) = A_0(t,x)$ and then
\begin{equation*}
2 \re U(t,x) \bu^*(t,x) = t^{-m} \Big(I_m( A_1)(t,x)  +  A_0(t,x) - U\bm U^*(t,x)\Big) = A(t,x)
\end{equation*}
which is analytic and symmetric. By substituting $ \bm $ in $ A $ we find that
$ A $ depends linearly on $ A_j $,  $ j = 0,\, 1 $, and that there exists an analytic and antisymmetric $ B(t,x)$ so that $ 2 U \bu^* = A + B $, which gives $u^* = U^{-1}(A + B)/2$ and the solution
\begin{equation}\label{ansymsol}
\bu(t,x) = \frac{1}{2}(A(t,x) - B(t,x))(U^{-1})^*(t,x)
\end{equation}
This gives the surjectivity of $ d\Cal F $ in~\eqref{3.2} and by the inverse function theorem we obtain local surjectivity of the nonlinear mapping $ \Cal F $ in~\eqref{3.1} in a neighborhood $ V $ of $(U,0) $. 
Observe that $ \Cal F $ will also be surjective on some $ (F_1, F_0) $ such that $| F_0(t,0) | \ll 1 $.

For any symmetric $ \bM(t,x) $ such that $ \partial_t^m \bM(t,x) \equiv 0 $ and $ \bM(t,0) = 0$, there exists a neighborhood $ \omega $ of~$ (0,0) $ such that $ (U, \bM) \in V $ at $ \omega $, and then \eqref{ansymsol} gives a solution to~\eqref{3.1} in $ \omega $.
But \eqref{3.2} is not injective, since any antisymmetric $ B $ gives a different solution. Actually, we have for fixed $ U $ and $ \bM \equiv 0 $ that the mapping $ (A_0,A_1,B) \mapsto (\bu, \bm) $ is bijective near~$ (0,0) $.

It remains to prove uniqueness in the case $ U(t,x) $ is positive definite near $ (0,0) $, which we obtain when $U(0,0) =(F_1(0,0)/m!)^{1/2} > 0 $. Since $ \bm $ already is determined, we shall prove that there is only one symmetric solution $ \bu(t,x) $ to equation~\eqref{ansymsol}.
If $\im u^* = 0$ then we obtain from Lemma~\ref{symlemma} a unique analytic antisymmetric  $ B(t,x)$ that solves
$$
\im (U^{-1} B) = - \im (U^{-1} A) \qquad \text{near~$ (0,0) $}
$$ 
for the analytic $ A(t,x)$ in \eqref{ansymsol}. Thus the unique analytic symmetric solution is given by 
$$ \bu(t,x) =  \frac{1}{2}U^{-1}(t,x)(A(t,x) + B(t,x)) $$ 
We find that $ d \Cal F $ is bijective which gives that $ \Cal F $ is a analytic bijection near $ (0,0) $ in the analytic symmetric case. \qed

\section{Proof  of Theorem \ref{anasymprep}  in the smooth case}\label{smoothc}

The smooth  $ N \times N $ systems form a local Fr\'echet spaces with the $ C^\infty(\ol \omega) $ norms, which are tame if $\, \ol \omega $ is a compact neighborhood with boundary, see Theorem~\ref{ThmB.9}. 
For tame Fr\'echet spaces one can use the Nash--Moser theorem, which is the inverse function theorem for tame Fr\'echet spaces. The difference to the analytic case is that is it not enought to check the local surjectivity of the linearized map when $ \bM(0) \equiv 0 $, instead one has to check that for any smooth $ U $ and $ \bM $ in a neighborhood, see Theorem~\ref{ThmB.10}.

So we shall show that if $ U(t,x) \in C^\infty $, $| U(0,0) | \ne 0$, $ \bM (t,x) = \sum_{j=0}^{m-1} t^j  M_j(x) \in C^\infty $ is symmetric such that $ M_j(0) = 0,\ \forall \, j  $, then
the following $ C^\infty $  mapping is surjective near~$ (0,0) $
\begin{multline}\label{4.1}
C^\infty \ni (U(t,x), \bM(t,x))   \overset{\Cal F}{\longrightarrow} \\ \Big(\partial_t^m\big(U(t,x)(t^m \id_N + \bM(t,x))U^*(t,x)\big), T_m(U \bM U^*)(t,x))\Big) \\ = (F_1(t,x), F_0(t,x)) \in C^\infty
\end{multline}
Here $ F_j $ is symmetric, $ j = 0,\, 1, $  $ F_1(0,0) > 0$, $\partial_t^m F_0 \equiv 0 $ and $ F_0(t,0) \equiv 0 $. 
As before, if we obtain a solution $ (F_1, F_0) $ to~\eqref{4.1} then we obtain $ F(t,x) $ from~\eqref{Frecon} so that $ F_0(t,x) = T_m( F)(t,x) $ and $ F_1(t,x) = \partial_t^m F(t,x) $.
It follows from Example~\ref{tamemapex} that the mapping $ \Cal F $ is tame of degree $m$.

Now, for any such $ (F_1(t,x),F_0(t,x)) $ we may choose $ (U_0(t,x),\bM_0(t,x)) $ so that $U_0(t,x)$ is invertible and $ U_0(0,0)U^*_0(0,0) \equiv F_1(0,0) $ and $ \bM_0(t,x) $ is a symmetric polynomial in $ t $ of degree less than $ m $ such that $ \bM_0(t,0) \equiv 0 $.
Observe that we may choose symmetric $U_0$ such that $U_0(0,0) = F_1^{1/2}(0,0) > 0$, and then $U_0 > 0$ near the origin.
Since we shall only need local solutions, we may assume that $ U_0(t,x)$ and  $ \bM_0(t,x) \in C^\infty(\ol {\omega})$ where $ \ol \omega $ is compact neighborhood of the origin so that $ U(t,x) $ is invertible and $ \bM(t,x) $ is symmetric such that $\partial_t^m \bM(t,x) \equiv 0 $ and $\bM(t,0) \equiv 0  $. 

Then we let $ V $ be a bounded neighborhood of $ (U_0(t,x), \bM_0(t,x)) $ in $C^\infty(\ol \omega)$ so that the first component is invertible and the second component is symmetric polynomial in $ t $ of degree less than~$ m $. 
If $U_0 > 0$ then $V$ is a neighborhood with symmetric matrices.
Since $ \bM_0(t,0) \equiv 0$  we may for $ \varepsilon > 0 $ choose $ V $ and a compact neighborhood~$ \ol \omega $ so that $ (U, \bM) \in V $ gives $ \mnorm{ \bM} \le \varepsilon $ in $ \ol \omega $.

We find for any $( U(t,x), \bM(t,x)) \in V $  that the differential of $ \Cal F $ at $(U(t,x), \bM(t,x)) $ is equal to
\begin{multline}\label{4.2}
C^\infty \ni (\bu(t,x), \bm(t,x)) \overset{d\Cal F}{\longrightarrow} \\ \Big(\partial_t^m\big( 2 \re(U(t^m\id_N +  \bM(t,x))\bu^*) + U\bm U^* \big), T_m(U\bm U^*) + 2 \re T_m(U\bM \bu^*)\Big) \\ =  (A_1(t,x), A_0(t,x)) \in 	C^\infty
\end{multline}
where  $ \bu(t,x) $ and $ \bm(t,x) $ are smooth such that $ \bm $ is symmetric and $\partial_t^m \bm \equiv 0$  which gives smooth symmetric $ A_j $, $ j = 0,\, 1 $, such that $ \partial_t^m A_0 \equiv 0 $. 

Next, we shall consider the special case when the functions are constant in $ x $.
We shall show that if $ \bu(t)$  and $\bm (t) \in  C^\infty(\ol {\omega}) $ for a sufficiently small neighborhoods $  \ol \omega $ of the origin in~$ \br$, then the mapping $ d \Cal F $ is tame and surjective with a tame right inverse. For that, the condition that $ \mnorm{\bM } \ll 1 $ in $\ol \omega$ is necessary.

\begin{prop}\label{dFprop}
	Let $U(t)$ and $\bM(t)$ be smooth and $N \times N$ valued functions such that $| U(t) | \ne 0$, $\partial^m_t \bM(t) \equiv 0$ and  $ \mnorm{\bM(t)} \ll 1 $. Then in a sufficiently small compact neighborhood  $\ol \omega $ of the origin, the mapping $d\Cal F$ given by~\eqref{4.2} is surjective, having a tame local right inverse of degree 3 in $\ol \omega $. If $U(t)$ is positive definite, then $d\Cal F$ is bijective and has a tame local inverse of degree 3.
\end{prop}

By Theorem \ref{ThmB.10}, we obtain that the $ C^\infty $ tame mapping\/ $ \Cal F $ is surjective on a neighborhood $W$ of a given $(F_1(t), F_0(t))$ close to the origin, having a smooth tame right inverse $ \Cal F^{-1} $ giving $(U(t), \bM(t)) = \Cal F^{-1}(F_1(t), F_0(t)) $. Now, if $F_j(t) = F_j(t,0)$ then for $|x| \ll 1$ we obtain that $ (F_1(t,x), F_0(t,x)) \in W $, 
so we obtain the symmetric preparation~\eqref{sprepres} with
\begin{equation}\label{tameinv}
	(U(t,x), \bM(t,x)) = \Cal F^{-1}(F_1(t,x), F_0(t,x)) 
\end{equation}
in a neighborhood of $ (0,0) $.

If  $U(t)$ is positive definite, then by Theorem \ref{ThmB.10} we obtain that the mapping $ \Cal F $  is a bijection in a neighborhood $W$ of a given $(F_1(t), F_0(t))$ near the origin with the tame inverse  $ \Cal F^{-1} $ giving a unique  $(U(t), \bM(t)) = \Cal F^{-1}(F_1(t), F_0(t)) $. If $F_j(t) = F_j(t,0)$ and $|x| \ll 1 $  then we obtain from~\eqref{tameinv} the unique symmetric solution $(U(t,x),\bM(t,x)) $ to~\eqref{sprepres} near the origin which finishes the proof of Theorem~\ref{anasymprep} in the smooth case. \qed

It remains to prove Proposition~\ref{dFprop}. 
We find from~\eqref{4.2} that 
\begin{equation}\label{meq}
\bm = T_m\left(U^{-1}\big(A_0 - 2 \re(U\bM \bu^*)\big)(U^{-1})^*\right) 
\end{equation}
and if $ P(t,\bM(t)) = t^m\id_N + \bM(t) $ with symmetric $ \bM(t) = \sum_{j < m} t^j M_j $ we have
\begin{equation}\label{symeq}
2  \re U P(t,\bM(t)) \bu^* = I_m(A_1)(t) - U(t) \bm(t) U^*(t) +\bR(t) = A(t)
\end{equation}
for some smooth  symmetric $ \bR(t) $ such that $ \partial_t^m  \bR(t) \equiv 0$. 
Observe that $ A $ depends linearly on $ \bm $  by \eqref{meq} and thus linearly on $ T_m(\bM \bu^*) $, and that we may assume that  $ \mnorm{\bM}= \sum_{j < m}\mnorm{M_j} \ll 1$.
By \eqref{symeq} there exists a smooth antisymmetric $ B(t)$ so that
\begin{equation}\label{4.5}
2P(t,\bM(t))\bu^*(t)  =  U^{-1}(t)I_m(A_1)(t) - \bm(t) U^*(t) + U^{-1}(t)\bR(t) + U^{-1}(t)B(t)
\end{equation}
We shall keep $ B $ fixed in the following, and we may take $ B(t) \equiv 0 $.
Since $ \bR(t) $ is symmetric and $ B(t) $ is antisymmetric, we may replace $ \bR(t) $ with $\bR(t) - T_m(B)(t)$ and let $ T_m(B) \equiv 0$. Then we can replace $ B(t) $ by $ t^mB(t) $ which gives
\begin{equation}\label{4.11}
U^{-1}(t)t^mB(t) = P(t,\bM(t)) U^{-1}(t)B(t) - \bM(t)U^{-1}(t) B(t) 
\end{equation}

We shall use the following result, which is the adjoint version of Proposition~3.2 in \cite{de:prep} in the special case when $ \pi_m = \id_{N} $.

\begin{prop}\label{PropA.2}
Let $G(t) \in \Cal{S}(\br)$ be $  N \times N $ valued and let $ P(t,\bM(t)) =t^m\id_{N} + \bM(t) $ with  $  N \times N $ system $ \bM(t) = \sum_{j < m} t^j M_j $, $ \bM = \set{M_j}_{j < m} $ and\/ $ \mnorm{\bM} = \sum_{j < m}\mnorm{M_j} $. Then we can find  $ N \times N $  systems
$Q(t,\bM,G) \in  C^\infty$ and $\bR(t,\bM,G) = \sum_{j < m} t^jR_j(\bM,G) \in  C^\infty$ 
depending linearly on~$G(t)$, such that
\begin{equation}\label{A.4}
G(t) = P(t,\bM(t))Q(t,\bM,G) + \bR(t,\bM,G)
{\quad}\text{if}{\quad} \mnorm{ \bM} < 1
\end{equation}
Also, we have the following estimates when $ \mnorm{ \bM} < 1 $
\begin{equation}\label{A.12}
\begin{aligned}
&\mnorm{\partial _{t}^{{\alpha}}\partial
_{\bM}^{{\beta}}Q(t,\bM,G)} \le
C_{{\alpha}{\beta}}\int (\mnorm G + \mnorm {G^{(j)}})\,dt
{\quad}j =  {\alpha}+ 3 + m(|{\beta}| + 1)\\
&\mnorm{\partial _{\bM}^{{\beta}}R_k(t,\bM,G)} \le
C_{{\beta}}\int (\mnorm G + \mnorm {G^{(j)}})\,dt
{\quad}j = 2 + m(|{\beta}| + 1) \qquad k < m
\end{aligned}	
\end{equation}
\end{prop}

We find that \eqref{A.4} gives a smooth surjective tame map $C^\infty \ni (Q,\bR) \mapsto G \in \Cal S $ of degree 0 having a smooth tame right inverse of degree $3 + m$, see Example~\ref{ExeB.11}. Observe that the estimate~\eqref{A.4} is not tame in the parameters $ \bM $.
Thus,  for $ \mnorm{\bM} \ll 1$ we shall use Proposition~\ref{PropA.2} on every term on the right hand side of~\eqref{4.5} using~\eqref{4.11}, after cut off so that they have compact support. We will denote by $ \bR_k(t) $ the terms that are polynomials of degree less than $ m $ in $ t $.

First, if $ P =   P(t,\bM(t))$ then Proposition~\ref{PropA.2} gives smooth  $ C_1(t) $ and $ \bR_1(t) $ so that
\begin{equation}\label{4.6}
U^{-1}(t)I_m( A_1)(t) = P C_1(t) + \bR_1(t)  \qquad \text{near $ t = 0 $}
\end{equation}
where $ C_1 $ and $ \bR_1 $ are  smooth depending linearly on $ \bm $  and thus linearly on $ T_m(\bM \bu^*) $ by~\eqref{meq}, and that
\begin{equation}\label{4.7}
- \bm(t)U^{*}(t) = P C_2(t) + \bR_2(t) \qquad \text{near $ t = 0 $}
\end{equation}
where $ C_2 $ and $ \bR_2 $ are  smooth and depending linearly on $ T_m(\bM \bu^*) $.
We also find
\begin{equation}\label{4.8a}
U^{-1}(t) = P C_3(t) + \bR_3(t) \qquad \text{near $ t = 0 $}
\end{equation}
with  smooth $ C_3 $ and $ \bR_3 $, thus $U^{-1}\bR =  P C_3\bR + \bR_3 \bR$.  By~\eqref{A.12},  $ C_3 $ is uniformly bounded when $ \mnorm{\bM} < 1 $. Thus $ \bR_3(0) $ is invertible if $ \mnorm{\bM(0)} \ll 1$ since $ \bR_3(0) = U^{-1}(0) - \bM(0)C_3(0)$. Finally, we obtain smooth $ C_4(t) $ and $ \bR_4(t) $ so that
\begin{equation}\label{4.8}
- \bM(t)U^{-1}(t) = P C_4(t) + \bR_4(t)  \qquad \text{near $ t = 0 $}
\end{equation}
where $ \mnorm{C_4}_k$ and $ \mnorm{\bR_4}_k $ are $ \Cal O(\mnorm{\bM}) $ for any local $ C^k$ norm.
We find from Proposition~\ref{PropA.2} that $ C_j(t) $ and $ \bR_j(t) $,  $\forall \, j $, depend smoothly and tamely of degree $3+m$ on $ U(t) $ and $ \bM(t) $.

Summing up, we find from~\eqref{4.11}, and \eqref{4.6}--\eqref{4.8} that 
\begin{equation}\label{uniqeq}
2Pu^*(t) = P\left(C_1 + C_2 + C_3 \bR + C_4B + U^{-1}B\right) + \bR_1 + \bR_2 + \bR_3\bR + \bR_4B = PC_0 + \bR_0
\end{equation}
locally near the origin in $ \bR $. Here $ C_0 $ and $\bR_0 $ are smooth depending linearly on $\bR$, $B$ and $ T_m(\bM \bu^*) $. 
Thus for fixed $ U $ and $ \bM $  we can write
\begin{equation}\label{5.11}
\left\{
\begin{aligned}
&C_0 = C_{00} +  C_{01}T_m(\bM \bu^*) + C_{3}\bR + C_4 B + U^{-1}B\\
&\bR_0 = \bR_{00} +  \bR_{01}T_m(\bM \bu^*) + \bR_{3}\bR + \bR_{4}B
\end{aligned}
\right.
\end{equation}
where $ \bR_3(t) $ is invertible near $  t = 0 $ and $ \mnorm{C_4}_k + \mnorm{R_4}_k = \Cal O(\mnorm{\bM}) $, $\forall \, k$. 
Observe that the products are matrix multiplications, except $ C_{01}T_m(\bM \bu^*) $ and $ R_{01}T_m(\bM \bu^*) $, which represent  linear maps. 

Now, since $ T_m(\bM \bu^*) $ appears in the equations we shall put $ G(t) = T_m(G)(t) +  t^mG_1(t)  $ for $ G \in C^\infty $ and $ C_{jk}(t) =  T_m(C_{jk})(t) +  t^mC_{jk}^1(t)$, $\forall \, jk $.
Then for fixed $ U $, $ \bM $ and $ B $ we obtain that  $ 2\bu^* \equiv C_0 $ and $ \bR_0 \equiv 0 $ if
\begin{equation}\label{5.12}
\left\{
\begin{aligned}
&2 T_m(\bu^*) - T_m(C_{01}  T_m(\bM\bu^*))  - T_m(C_{3} \bR)  \equiv T_m(C_{00}) + T_m(C_{4}B) + T_m(U^{-1}B) \\
&- C_{01}^1 T_m(\bM \bu^*) + 2 \bu_1^*  - (C_{3} \bR)_1  \equiv C_{00}^1 + (C_{4}B)_1 + (U^{-1}B)_1  \\
&\bR_{01}T_m(\bM \bu^*)  + \bR_3\bR  \equiv -\bR_{00} - \bR_{4}B  
\end{aligned}
\right.
\end{equation}
Observe that the first equation is modulo $ \Cal O(t^m) $, and that  
\begin{equation}\label{Tmest}
\mnorm{ T_m(\bM \bu^*)}_k  \ls \mnorm{\bM }  \mnorm{ \bu^*}_{k + m - 1}  
\end{equation}
since $ \mnorm{\bM }_k \ls \mnorm{\bM } $ and  $ \mnorm{T_m\bu^* }_k \ls \mnorm{\bu^* }_{k + m - 1} $ for any local $ C^k $ norm.
Then we get from~\eqref{Tmest} that~\eqref{approxsys} is given by
\begin{equation}\label{approxsys}
\begin{pmatrix}
2\id_N & 0 & - T_m(C_{3}\ \boldsymbol{\cdot})   \\
0 & 2\id_N  & -(C_{3}\ \boldsymbol{\cdot} )_1 \\
0 & 0  & \bR_3
\end{pmatrix}
\end{equation}
modulo terms having local $ C^k $ norms that are $ \Cal O( \mnorm{\bM }) $.
Here $\im(C\  \boldsymbol{\cdot})$ is the  mapping $E \mapsto \im(CE) $ 
and $ T_m(C\ \boldsymbol{\cdot}) $  is the  mapping $E \mapsto T_m(CE ) $.
The system~\eqref{approxsys} is invertible close to $t=0$ since $ \bR_3(0) $ is invertible. Thus, we find that the system~\eqref{5.12} is invertible in a compact neighborhood~$ \ol \omega$ of the origin if $ \mnorm{\bM } \ll 1$, which can be obtained by shrinking $ V $ and $ \ol \omega $. Since this system is linear, smooth and invertible, we find from Example~\ref{tamemapex} that it has a tame smooth inverse of degree 0 which gives a solution $( T_m(\bu^*), \bu^*_1,  \bR)$.
 
Thus, we obtain a smooth solution $\bu^*(t) \equiv  T_m(\bu^*)(t) +  t^m\bu^*_1(t) \equiv  C_0(t)/2 $ for $ \bR_0(t) \equiv 0 $ in a sufficiently small compact neighborhood $\ol  \omega $ of the origin. Observe that the neighborhood~$\ol \omega $ only depends on the bounds on $ U $, $ \bM  $ and $ B $.
Having found $ \bu^*(t) $ satisfying~\eqref{symeq} we obtain $(\bu(t), \bm(t) )$ by \eqref{meq} in $\ol \omega $. This gives surjectivity and a tame local right inverse of degree $3 + m$ to $ d \Cal F $ in $\ol \omega $  for any $ (U(t),\bM(t)) \in V $. Observe that different antisymmetric $ B $ gives different inverses.

It remains to prove uniqueness in the symmetric case for $ (U,\bM) \in V $, when $ U $ and $ \bu(t) $ are symmetric and  $  U > 0  $ which then holds in $V$. We shall show that there is a unique symmetric solution to the equation, and find the corresponding $ B(t) $ in~\eqref{4.5}. By adding the equation $ \im \bu(t) \equiv \im(C_0)(t) \equiv 0$ with $ C_0(t) $ given by~\eqref{5.11} we find as before
\begin{equation}\label{5.13}
\left\{
\begin{aligned}
&2 T_m(\bu) - T_m(C_{01}T_m(\bM\bu^*))  - T_m(C_{3} \bR) -  T_m(C_{4}B) + T_m(U^{-1}B) \equiv T_m(C_{00}) \\
&-C_{01}^1 T_m(\bM \bu^*) + 2 \bu_1  -  (C_{3} \bR)_1 - (C_{4}B)_1 - (U^{-1}B)_1 \equiv C_{00}^1  \\
&\bR_{01}T_m(\bM \bu^*)  + \bR_3\bR + \bR_{4}B \equiv -\bR_{00} \\
&\im(C_{01}T_m(\bM\bu^*)) + \im( C_{3}\bR) + \im(C_{4}B) + \im(U^{-1}B) \equiv -\im( C_{00})
\end{aligned}
\right.
\end{equation}
which is invertible near $t = 0$ when $ \mnorm{\bM } \ll 1 $. 
In fact, the matrix of the system is  
\begin{equation}\label{5.14}
\begin{pmatrix}
2\id_N & 0 & - T_m(C_{3}\ \boldsymbol{\cdot}) &  - T_m(U^{-1}\ \boldsymbol{\cdot}) \\
0 & 2\id_N  & -(C_{3} \ \boldsymbol{\cdot})_1 & -(U^{-1}\ \boldsymbol{\cdot})_1 \\
0 & 0 & \bR_3  & 0  \\
0  & 0 & \im( C_{3}\ \boldsymbol{\cdot}) & \im(U^{-1}\ \boldsymbol{\cdot})
\end{pmatrix}
\end{equation}
modulo terms having local $ C^k $ norms that are  $ \Cal O( \mnorm{\bM }) $.
Here $\im(U^{-1}\ \boldsymbol{\cdot})$ is invertible on antisymmetric matrices by Lemma~\ref{symlemma}. 
Since  $ \bR_3(0)$ also is invertible, we find that~\eqref{5.14} is invertible near the origin. As before, we find that~\eqref{5.13} is invertible in a sufficiently small compact neighborhood $\ol  \omega $ of the origin if $ \mnorm{\bM } \ll 1 $, which can be obtained by shrinking $ V $ and $ \ol \omega $.
Thus, we obtain a unique smooth solution $ (\bu(t), \bR(t),B(t)) $ to~\eqref{5.13} with symmetric $ \bu  $, and by \eqref{meq}  a unique smooth symmetric solution $ (\bu(t), \bm(t)) $ to \eqref{4.2}. 

Thus, $ d \Cal F $ is a tame bijection in a compact neighborhood $\ol  \omega $ of the origin for any symmetric $ (U(t),\bM(t)) \in V $.
Observe that this gives a tame inverse $d \Cal F^{-1}$ of degree $3 + m$ by Example~\ref{ExeB.11}, and the neighborhood  $\ol \omega $ only depends on the bounds on $ U $ and $ \bM $. 
\qed

\appendix

\section{A symmetry lemma}

In this section, we are going to prove the following result about symmetric and antisymmetric invariance of matrices.

\begin{lem}\label{symlemma}
	If $ U(x) > 0  $ is a smooth  $ N \times N $ matrix, then $ A(x) \mapsto \re(UA)(x) $ is a bijection of smooth  symmetric matrices and  $ A(x) \mapsto \im(UA)(x) $ is a bijection of smooth  antisymmetric matrices. If $ U(x) $ is analytic, then this holds for analytic $ A(x) $.
\end{lem}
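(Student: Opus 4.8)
The plan is to work pointwise in $x$ and reduce both statements to linear algebra facts about the linear operators $L_+(A) = \re(UA)$ on the space of symmetric matrices and $L_-(A) = \im(UA)$ on the space of antisymmetric matrices, where $U = U(x) > 0$ is fixed. Since $U > 0$, write $U = V^2$ with $V = V^* > 0$ the positive square root; $V$ depends smoothly (resp.\ analytically) on $x$ because the square root is an analytic function of a positive definite matrix, so once the pointwise bijectivity and a formula for the inverse are established, smoothness/analyticity of the inverse map will follow automatically.

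First I would treat the symmetric case. Given a symmetric target $S = S^*$, I seek symmetric $A$ with $\re(UA) = \tfrac12(UA + A^*U) = S$, i.e.\ $UA + A^*U = 2S$. The natural guess is a Sylvester-type substitution: set $A = V^{-1} X V^{-1}$ — no, better, observe that if $A$ is symmetric then $UA + AU^* $ is not symmetric in general, so instead conjugate: with $B = VAV$ (symmetric iff $A$ is, since $V = V^*$), the equation $UA + A U $ is still not what appears. The cleaner route is to diagonalize $U = Q\Lambda Q^*$ with $Q$ orthogonal and $\Lambda = \operatorname{diag}(\lambda_i)$, $\lambda_i > 0$, and set $A = Q \wt A Q^*$, $S = Q \wt S Q^*$; then $\re(UA) = S$ becomes $\lambda_i \wt A_{ij} + \ol{\wt A_{ji}}\,\lambda_j = 2\wt S_{ij}$ entrywise. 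Imposing $\wt A = \wt A^*$ (i.e.\ $\wt A_{ij} = \ol{\wt A_{ji}}$) this is $(\lambda_i + \lambda_j)\wt A_{ij} = 2 \wt S_{ij}$, uniquely solvable since $\lambda_i + \lambda_j > 0$, and the solution is automatically symmetric because $\wt S$ is. This gives bijectivity of $L_+$ on symmetric matrices with inverse $S \mapsto Q\big(2\wt S_{ij}/(\lambda_i+\lambda_j)\big)_{ij}Q^*$. For the antisymmetric case the identical computation with $\wt A_{ij} = -\ol{\wt A_{ji}}$ and $S$ replaced by an antisymmetric target $T$ (so $\wt T_{ij} = -\ol{\wt T_{ji}}$, and one takes $\im$ in place of $\re$) yields $(\lambda_i + \lambda_j)\wt A_{ij} = 2i\,\wt T_{ij}$, again uniquely solvable, and the solution is antisymmetric since $\wt T$ is.

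Alternatively — and this is probably the form I would actually write — there is a basis-free inverse: the map $S \mapsto \int_0^\infty e^{-tU} S\, e^{-tU}\,dt$ inverts $A \mapsto \tfrac12(UA + AU)$, and by a slight modification $S \mapsto \int_0^\infty e^{-tU}(2S) e^{-tU}\, dt$ handles $UA + A^*U = 2S$ restricted to symmetric $A$, since on the symmetric (resp.\ antisymmetric) subspace the operator $A \mapsto \re(UA)$ agrees with $A \mapsto \tfrac12(UA+AU)$ (resp.\ $A\mapsto \im(UA)$ agrees with $\tfrac1{2i}(UA - AU)$ up to sign bookkeeping). Either way the integral (or the $Q,\Lambda$ formula) makes manifest that the inverse is a smooth, resp.\ analytic, function of $(U,S)$, hence of $x$, and that it preserves the symmetry class.

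The main obstacle is bookkeeping: keeping straight how $\re$ and $\im$ of $UA$ interact with the symmetry of $A$ versus the symmetry of the target — in particular checking that the pointwise solution lands in the correct symmetry class and that no spurious constraint on $S$ or $T$ is needed (it is not, precisely because $U > 0$ forces $\lambda_i + \lambda_j \neq 0$; for indefinite $U$ one could have $\lambda_i + \lambda_j = 0$ and lose both existence and uniqueness). After that, promoting the pointwise inverse to a smooth/analytic matrix-valued map is routine via the spectral-integral or square-root representation.
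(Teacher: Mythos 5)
Your strategy --- pointwise diagonalization of $U$ plus an explicit formula for the inverse to get smooth/analytic dependence --- is viable, and the symmetric half is correct. But the antisymmetric half as written contains a genuine bookkeeping error, exactly at the point you yourself flagged as the main obstacle. With the conventions in force ($\re M = \frac12(M+M^*)$, $\im M = \frac1{2i}(M-M^*)$, $A^*=\ol A^{\,t}$), for antisymmetric $A$ and $U=U^*$ one has $\im(UA)=\frac1{2i}(UA+AU)$, the \emph{anti}commutator, not $\frac1{2i}(UA-AU)$: the commutator map is not injective (it annihilates $i\id_N$ and $iU$), so this is not ``sign bookkeeping''. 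Moreover $\im(UA)$ is always symmetric (Hermitian), whatever $A$ is, so the target $T$ in the antisymmetric case must be taken symmetric, not antisymmetric. Indeed, plugging an antisymmetric $\wt T$ into your own formula $\wt A_{ij}=2i\,\wt T_{ij}/(\lambda_i+\lambda_j)$ produces a \emph{symmetric} $\wt A$, contradicting your claim that ``the solution is antisymmetric since $\wt T$ is''; it is a symmetric $\wt T$ that yields an antisymmetric $\wt A$. The invariant statement (and the paper's formulation, via the product $S(U,A)=\frac12(UA+AU)$) is that $A\mapsto\frac12(UA+AU)$ preserves each symmetry class and is bijective on it, with $\re(UA)=S(U,A)$ on symmetric $A$ and $\im(UA)=-i\,S(U,A)$ on antisymmetric $A$; this is what is actually used later, e.g.\ when $\im(U^{-1}B)=-\im(U^{-1}A)$ is solved for antisymmetric $B$ with a symmetric right-hand side. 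Also, the diagonalizing $Q$ should be unitary, not orthogonal, since the matrices are complex Hermitian.

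On the regularity step your route genuinely differs from the paper's, and is fine once stated carefully. The eigendecomposition formula does not by itself give smoothness in $x$ (eigenvalues can cross, so $Q(x),\Lambda(x)$ need not be smooth), so ``makes manifest'' is an overstatement for the $Q,\Lambda$ expression; but your basis-free formula $S\mapsto\int_0^\infty e^{-tU}(2S)e^{-tU}\,dt$ --- or simply the remark that the inverse of the invertible linear map $A\mapsto\frac12(UA+AU)$ depends analytically on $U$ by Cramer's rule --- does give smooth, resp.\ analytic, dependence on $x$ and preserves the symmetry class. This is shorter than the paper's argument, which establishes the pointwise bound $|\alpha_{jk}|\le\mn{U^{-1}}\,\mn{S(U,A)}$, deduces continuity from compactness and uniqueness, proves $C^\infty$ by an induction on difference quotients, and gets analyticity via $\partial_{\bar z}$ and uniqueness.
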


\begin{proof}
	First we consider the case of constant matrices. To simplify, we define the symmetric product $ S(U,A) = \frac{1}{2}(UA + AU) $, which is symmetric when $ A$ is symmetric and antisymmetric when $ A $ is antisymmetric, thus $A \mapsto S(U,A) $ preserves the symmetry. 
	We also have that linear ON mappings $ O $ preserves the symmetry, since $ O A O^{-1} = OAO^* $.
	
	Next, we are going to prove that $A \mapsto S(U,A)$ is injective for both symmmetric and antisymmetric $A$.
	Since any $ N \times N $ matrix can be written uniquely as a sum of a symmetric and an antisymmetric matrix, we will then find that $ A \to S(U,A) $ is a bijection for any $ N \times N $ matrix $ A $.
	Thus, by using an ON base of eigenvectors to $ U $, we find that $ U = \{ \mu_j \delta_{jk}\}_{jk} $ with $ \mu_j > 0, \ \forall \, j $, and $ A = \{ \alpha_{jk}\}_{jk} $. If $ A $ is symmetric then  $  \alpha_{kj} = \alpha_{jk} $ $\forall \, j k $, and if $ A $ is antisymmetric then  $  \alpha_{kj} = - \alpha_{jk} $   $\forall \, j k $. In both cases, we find that 
	$$ 
	S(U,A)  =  \{ (\mu_{j} + \mu_{k})\alpha_{jk}/2 \}_{jk} = 0
	$$
	if and only if $ \alpha_{jk} = 0 $  $\forall \, j k $. So $ A \mapsto S(U,A) $ is injective, and thus bijective in both cases. Observe that $ |\alpha_{jk} | \le \mn{U^{-1}} \mn{S(U,A)}$ since $ \mu_j^{-1} \le \mn{U^{-1}} $,  $ \forall \, j $.
	
	Next, we consider  the equation
	$$ 
	S(U,A)(x) = S(U(x),A(x)) = B(x) \in C^\infty
	$$
	which has a unique solution $A(x) = \{ \alpha_{jk}\}_{jk}(x) $ such that $  |\alpha_{jk}(x) | \le \mn{U^{-1}(x)} \mn{B(x)} $. By compactness we find for any $ x_0 $ and any sequence $ x_j \to x_0 $ that there exixts a subsequence having a limit: $ A(x_j) \to A_0$. Then by continuity and uniqueness we find that $ \lim_{x \to x_0} A(x) = A_0 = A(x_0)$, which proves continuity of $ A(x) $.
	
	To show that $ A(x) $ is differentiable, we put $ \Delta_{h,y}A(x) = \frac{A(x + hy) - A(x)}{h}$ for any $ x, \, y \in \br^n$ and $ h > 0 $. Then we find
	\begin{equation}
		\Delta_{h,y}S(U,A)(x) = S(\Delta_{h,y} U(x),A(x+ hy)) + S(U(x),\Delta_{h,y}A(x)) = \Delta_{h,y}B(x)
	\end{equation}
	which gives that 
	\begin{equation}
		S(U(x),\Delta_{h,y}A(x)) = \Delta_{h,y}B(x) - S(\Delta_{h,y} U(x),A(x+ hy)) 
	\end{equation}
	where the right hand side is continuous in $ h $. As before, we that the left hand side has the unique limit $ S(U(x_0), y\cdot \partial A(x_0)) $ for any $ y $, giving $ S(U, \partial A) =  \partial B  - S(\partial U, A)$.
	
	Let $ A_\alpha(x) = \partial^\alpha A(x) $ and $ A_k(x) = \{ A_\alpha (x)\}_{|\alpha | \le k} $ and similarly for $ U(x) $ and $ B(x) $. 
	By induction, we can assume that $ A(x) \in C^k$ and
	that 
	\begin{equation}
		B_\gamma(x) = \sum_{\alpha + \beta = \gamma}c^\gamma_{\alpha \beta}S(U_\alpha(x) ,A_\beta(x)
		)  \qquad \text{for $ | \gamma | = k $}
	\end{equation}
	Then we find for $ |\beta | = k $ that
	\begin{multline}
		S(U(x),\Delta_{h,y}A_\beta(x)) = \Delta_{h,y}B_\beta(x) - S(\Delta_{h,y} U(x),A_\beta(x+ hy)) \\ - \Delta_{h,y}\sum_{\alpha \ne 0}c_{\alpha\beta}S(U_\alpha(x),A_{\beta - \alpha}(x))
	\end{multline}
	where the right hand side is continuous in $ h $. As before, we find that  that the left hand side has the limit $ S(U,y \cdot \partial A_{k})(x) $ which gives that $ A_k(x) \in C^1 $ so $ A(x) \in C^{k+1}$. By induction we obtain the result in the $C^\infty$ case.
	
	If $ U(x) $ and $ B(x) $ are analytic, then they have holomorphic extensions in a complex neighborhood of $ \br^n $. We then find that $ S(U(z), \partial_{\bar z_j}A(z)) =  \partial_{\bar z_j}B(z)  - S(\partial_{\bar z_j}U(z), A(z)) = 0$ in that neighborhood $ \forall \, j $, which gives the analyticity of $ A(x) $ by uniqueness.
\end{proof}

\section{Tame Fr\'echet spaces and the Nash--Moser theorem}\label{tamenm}

In this section, we will recall the Nash--Moser Theorem and the theory of tame Fr\'echet spaces and tame maps from Sections II and III in \cite{ham}. Recall that a Fr\'echet space is a convex and complete vector space with topology defined by a countable number of seminorms, thus it is a complete metrizable vector space. But for the Nash--Moser Theorem one needs Fr\'echet spaces with a more refined structure.

\begin{defn}
	A grading on a Fr\'echet space is a collection of seminorms $ \{ \mn{\cdot }_n , \ n \in \bn\} $ that defines the topology and is increasing: $ \mn{ f}_j \le  \mn{ f}_k $ when $ j \le k $.
\end{defn}

Of course, every Fr\'echet space has a grading, for example by adding to each seminorm all the lower ones. Examples are $ C^{\infty} $ functions and vector bundles with $ C^k $ norms. Closed subspaces of graded spaces are graded with the induced norms, and Cartesian products $ F \times G $ can be graded with the norms $\mn{ (f,g)}_n = \mn{f}_n  + \mn{g}_n$. 

\begin{defn}
	Two gradings $ \mn{\cdot }_n$ and $ \mn{\cdot }'_n$ are tamely equivalent of degree $ r $ and base~$ b $ if
	\begin{equation}
	 \mn{f }_n \le C_n \mn{f }'_{n+r} \quad\text{and} \quad \mn{f }'_n \le C'_n \mn{f }_{n+r}
  	\end{equation}
  for any $ n \ge b $. If this holds for any $ n $ then they are equivalent of degree $ r $.
\end{defn} 

For example, if $ X $ is a compact manifold, then the $ C^k $ norms, H\"older norms and Sobolev norms are tamely equivalent.

\begin{defn}
	A linear map $ L: \ F \mapsto G $ of a graded space into another is tame, if it satisfies a tame estimate of degree $ r $ and base $ b $:
	\begin{equation}\label{B.2}
		\mn{Lf }_n \le C_n \mn{f }_{n+r}
	\end{equation}
for any $ n \ge b $. If\/ \eqref{B.2} holds for any $ n $ then $ L $ is tame  of degree $ r $.
\end{defn}

\begin{exe}\label{tamemapex}
	A $ C^\infty $  linear partial differential operator $ L $  of order $ m $  on $ C^\infty $ is a tame of degree $ m $ since $ \mn{Lf }_n \le C_n \mn{f }_{n+m} $ for the $ C^k $ norms for any $ n $. 
	In particular, multiplication with  $ C^\infty $ functions are tame of degree 0. 
	Solutions to invertible linear systems of smooth equations give tame maps of degree 0 by Leibniz' rule.
	Similarly, for any $ m \in \bn $ and $ C^\infty $ system  $ \bM(t,x) $, we find that the mapping 
	$$ 
	(Q(t,x), \bR(t,x)) \mapsto G(t,x) = Q(t,x)(t^m\id_n + \bM(t,x)) + \bR(t,x)
	$$
	is tame of degree 0.
\end{exe}

\begin{exe}
	The linear mapping $L: C^{\infty}[0,1] \mapsto  C^{\infty}[-1,1] $ given by $ Lf(x) = f(x^2) $ is tame with $ \mn{Lf}_n \le C_n \mn{f}_n $. The image of $ L $ is the closed subspace of even functions $  C_S^{\infty}[-1,1]  $. But the inverse $ L^{-1}:\  C_S^{\infty}[-1,1] \mapsto  C^{\infty}[0,1]$ is not tame, since the $ 2n $:th  Taylor coefficient of $ Lf = g $ is the $ n $:th Taylor coefficient of $ f = L^{-1}g $ so the best estimate is $ \mn{L^{-1}f}_n \le C_n \mn{f}_{2n} $.
\end{exe}

Compositions of linear tame maps are linear and tame, and $ L $ is a tame isomorphism if $ L $ is linear isomorphism such that $ L $ and $ L^{-1} $ are tame.

\begin{defn}
	Let $ F $ and $ G $ be graded spaces. Then we say that $ F $ is a tame direct summand of $ G $ if we can find tame linear maps $ L: \ F \mapsto G $ and $ M: \ G \mapsto F $ so that the composition $ ML : \ F \mapsto F $ is the identity: $F \overset{L}{\mapsto} G \overset M\mapsto F  $.
\end{defn}

\begin{defn}
	We say that a graded space is tame if it is a tame direct summand of a space $ \Sigma (B) $ of exponentially decreasing sequences in some Banach space $ B $.
\end{defn}

A Cartesian product of two tame spaces is tame. 

\begin{thm}\label{ThmB.9}
The space $ C^\infty(X) $ is tame if $ X $ is a compact manifold (with or without boundary), and if $ V $ is a  vector bundle over $ X $ then the sections of the vector bundle $ C^\infty(X,V) $ is also tame, since it can be written as a tame direct summand of a tame space. 
\end{thm}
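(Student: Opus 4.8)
The plan is to prove Theorem~\ref{ThmB.9} by reducing it to the known case of $C^\infty(Y)$ for a \emph{closed} manifold $Y$ and then constructing explicit tame extension/restriction maps. First I would dispose of the closed-manifold case by invoking the standard result (Theorem~1.3.6 in~\cite{ham}): if $X$ is a closed manifold, embed it in a Euclidean space, use a tubular neighborhood and a partition of unity to show $C^\infty(X)$ is a tame direct summand of $C^\infty(\mathbf T^N)$ for a torus of suitable dimension, and observe that $C^\infty(\mathbf T^N) \cong \Sigma(B)$ via Fourier series with $B$ the sequence space weighted so that the $C^k$ norms match. This gives tameness of $C^\infty(X)$ in the boundaryless case directly.

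Next I would handle the case where $X$ is a compact manifold \emph{with} boundary. The key device is the \emph{double} $DX = X \cup_{\partial X} X$, which is a closed manifold, so $C^\infty(DX)$ is tame by the previous step. The restriction map $\rho: C^\infty(DX) \to C^\infty(X)$ is obviously tame of degree $0$ (it only shrinks the domain). For the other direction I need a tame linear extension operator $E: C^\infty(X) \to C^\infty(DX)$ with $\rho \circ E = \mathrm{Id}$; the classical Seeley extension theorem (or Whitney's extension near the boundary, glued with a partition of unity to the interior) provides exactly such an $E$, and the Seeley construction is visibly tame because each $C^k$ norm of $Ef$ is bounded by a fixed $C^k$ norm of $f$ with a degree-$0$ loss. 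Composing, $C^\infty(X)$ is a tame direct summand of the tame space $C^\infty(DX)$, hence tame. The same argument, applied fiberwise, handles $C^\infty(X,V)$: choose a complementary bundle $W$ so that $V\oplus W$ is trivial, $V\oplus W \cong X\times \mathbf R^m$, so $C^\infty(X,V)$ is a tame direct summand of $C^\infty(X)^m$, which is tame as a finite Cartesian product.

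I expect the main obstacle to be purely bookkeeping: verifying that the chosen extension operator $E$ (Seeley's, say) genuinely satisfies a tame estimate $\mnorm{Ef}_n \le C_n\mnorm{f}_n$ uniformly, rather than merely being continuous. Seeley's operator is defined by a series $Ef(x',-x_n) = \sum_k a_k\,\varphi(b_k x_n)\, f(x',b_k x_n)$ with rapidly decaying coefficients, and one must check that differentiating $n$ times and summing does not cost more than finitely many derivatives of $f$; this is standard but needs the explicit decay of the $a_k,b_k$. Alternatively, one can sidestep Seeley entirely by noting that a compact manifold with boundary sits inside a slightly larger open manifold of the same dimension, covering both by coordinate patches, and using one-dimensional half-line extension (the Borel-type construction) in the normal direction patch by patch — again a routine but careful estimate. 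Once the tame estimate on $E$ is in hand, the direct-summand structure and hence tameness follow formally, and the vector bundle statement is an immediate corollary via the triviality trick $V\oplus W \cong X\times\mathbf R^m$.
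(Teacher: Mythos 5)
Your proposal is correct, and it is essentially the argument of Hamilton that the paper itself relies on: the paper gives no proof of Theorem~\ref{ThmB.9} beyond citing Sections II--III of \cite{ham}, and your reduction (closed case via embedding in a torus and Fourier series; boundary case via the double plus a Seeley-type extension, which is indeed tame of degree $0$; bundles via a complement $W$ with $V\oplus W$ trivial) is exactly that standard route. The one imprecision is the claim $C^\infty(\mathbf T^N)\cong\Sigma(B)$ via Fourier series: since $\Sigma(B)$ carries exponential weights while Fourier coefficients of smooth functions decay only faster than polynomially, the correct statement is that $C^\infty(\mathbf T^N)$ is a \emph{tame direct summand} of $\Sigma(B)$ after grouping the coefficients into dyadic (logarithmic) blocks, which is precisely what Hamilton's Theorem~1.3.6 does, so this does not affect the validity of your outline.
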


\begin{thm}
If $ X $ is a compact manifold with boundary, then $ C^\infty_0(X) $ and $ C^\infty_0(X,V) $ are tame. 
\end{thm}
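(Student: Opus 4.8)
The plan is to exhibit $C^\infty_0(X)$ — which we take to be the closed subspace of $C^\infty(X)$ consisting of the functions vanishing to infinite order on $\partial X$ (equivalently, the Fr\'echet closure of the functions supported away from $\partial X$) — as a tame direct summand of $C^\infty$ of a compact manifold \emph{without} boundary, and then quote Theorem~\ref{ThmB.9}. Let $DX$ be the double of $X$, obtained by gluing $X$ to a second copy $X'$ of itself along $\partial X$ and smoothing the gluing with a collar; then $DX$ is a compact $C^\infty$ manifold without boundary, $X$ and $X'$ are codimension-zero submanifolds with common boundary $\partial X$, and $C^\infty(DX)$ is tame by Theorem~\ref{ThmB.9}.

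First I would introduce the extension-by-zero map $\iota\colon C^\infty_0(X)\to C^\infty(DX)$, with $\iota f$ equal to $f$ on $X$ and to $0$ on $X'$. In a collar chart $\partial X\times(-\varepsilon,\varepsilon)$ this is $f(y,s)$ for $s\ge 0$ and $0$ for $s\le 0$, which is $C^\infty$ exactly because $f$ is flat on $\partial X$; the map is linear, and for the $C^n$ norms one has $\mnorm{\iota f}_n=\mnorm f_n$, so $\iota$ is tame of degree $0$. Its image is the closed subspace $\Cal I:=\set{g\in C^\infty(DX):g\restr{X'}=0}$: if $g$ vanishes on $X'$ it vanishes on the open set $X'\setminus\partial X$, hence so do all its derivatives, and by continuity $g$ is flat on $\partial X$, so $g=\iota(g\restr{X})$ with $g\restr{X}\in C^\infty_0(X)$. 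Thus $\iota$ is a linear isomorphism onto $\Cal I$, and its inverse $g\mapsto g\restr{X}$ is again tame of degree $0$.

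Next I would split $\Cal I$ off inside $C^\infty(DX)$. The restriction $\rho\colon C^\infty(DX)\to C^\infty(X')$, $\rho g=g\restr{X'}$, is tame of degree $0$ with $\ker\rho=\Cal I$, and it admits a tame linear right inverse $E\colon C^\infty(X')\to C^\infty(DX)$ — a partition-of-unity version of Seeley's extension operator across $\partial X$, which is precisely the extension mechanism already used in the proof of Theorem~\ref{ThmB.9} (see also~\cite{ham}). Then $\pi:=\id-E\rho$ is a tame linear endomorphism of $C^\infty(DX)$ with $\rho\pi=\rho-\rho E\rho=0$ and $\pi\restr{\Cal I}=\id$, i.e.\ a tame projection onto $\Cal I$; composing, $M:=\iota^{-1}\circ\pi\colon C^\infty(DX)\to C^\infty_0(X)$ is tame and $M\circ\iota=\id$. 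Hence $C^\infty_0(X)$ is a tame direct summand of the tame space $C^\infty(DX)$, and is therefore tame.

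For the bundle case I would avoid extending $V$ over $DX$: by compactness of $X$ choose $V'$ with $V\oplus V'$ trivial of rank $m$, so that $C^\infty_0(X,V)$ is a tame direct summand — via the pointwise projection and inclusion, which are tame of degree $0$ — of $C^\infty_0(X,V\oplus V')\cong\big(C^\infty_0(X)\big)^m$; the latter is tame by the scalar case together with the fact that Cartesian products of tame spaces are tame, so $C^\infty_0(X,V)$ is tame. There is no serious obstacle here: the one genuine input beyond Theorem~\ref{ThmB.9} is the tameness of Seeley's extension operator, which is the same fact underlying the "tame direct summand" assertion in Theorem~\ref{ThmB.9}; the only point needing a little care is the well-definedness and tameness of extension by zero, i.e.\ the elementary observation that a smooth function on $DX$ vanishing on the closed half $X'$ is automatically flat to infinite order on $\partial X$.
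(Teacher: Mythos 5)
Your argument is correct, and it is worth noting that the paper itself gives no proof of this theorem: it is stated as a quoted result, with the appendix referring to Sections II--III of \cite{ham} for all proofs. Your route — double $X$ to a closed manifold $DX$, identify $C^\infty_0(X)$ via extension by zero with the closed subspace of $C^\infty(DX)$ of functions vanishing on the second copy, and split that subspace off with the projection $\id - E\rho$ built from a tame Seeley-type extension operator — is essentially the mechanism Hamilton uses for manifolds with boundary, so you are not diverging from the intended proof so much as writing it out. The two ingredients you treat as standard are indeed the load-bearing ones and are fine: the Seeley extension does satisfy $\mnorm{Ef}_n \le C_n \mnorm{f}_n$, hence is tame of degree $0$, and a tame direct summand of a tame space is tame by composing your maps $\iota$, $M$ with the maps exhibiting $C^\infty(DX)$ as a summand of some $\Sigma(B)$. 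Your reduction of the bundle case to the scalar case via a complementary bundle $V'$ with $V \oplus V'$ trivial is also the standard device and is valid since $X$ is compact; the only interpretive point, which you handle correctly, is that $C^\infty_0(X)$ must be read as the Fr\'echet space of functions flat on $\partial X$ (the closure of the compactly supported ones in $C^\infty(X)$), as otherwise the statement would not concern a graded Fr\'echet space at all.
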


Next, we shall look at nonlinear maps.

\begin{defn}
Let $ F $ and $ G $ be graded spaces and $ P: \ F \supseteq U \mapsto G $ a nonlinear map, then we say that $ P $ satisfies a tame estimate of degree $ r $ and base $ b $ if
\begin{equation}\label{B.3}
\mn{P(f)}_n \le C_n(1 + \mn{f}_{n+r})
\end{equation}
for all $ f \in U  $ and all $ n \ge b $. If \eqref{B.3} holds for any $ n $ then $ P $ is tame of degree $ r $. 
We say that $ P $ is a tame map if $ P $ is defined on an open set, is continuous and satisfies a tame estimate in a neighborhood of any point.
\end{defn}

As before, if \eqref{B.3} holds for any $ n $ then we say $ P $ satisfies a tame estimate of degree $ r $.

\begin{thm}
A composition of tame maps is tame.
\end{thm}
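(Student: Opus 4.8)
The plan is to argue directly from the definition of a tame map, i.e.\ from the local tame estimate~\eqref{B.3}. Let $P : F \supseteq U \to G$ and $Q : G \supseteq V \to H$ be tame maps. First I would note that $Q \circ P$ is defined on $W = \{\, f \in U : P(f) \in V \,\}$, which is open because $P$ is continuous, and that $Q\circ P$ is continuous there as a composition of continuous maps; so only the local tame estimate remains to be verified.

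Fix $f_0 \in W$ and set $g_0 = P(f_0) \in V$. Since $Q$ is tame, I can choose a neighborhood $V_0$ of $g_0$, a degree $s$, a base $c$ and constants $D_n$ with $\mn{Q(g)}_n \le D_n(1 + \mn{g}_{n+s})$ for $g \in V_0$ and $n \ge c$. By continuity of $P$ at $f_0$ there is a neighborhood $U_0 \subseteq U$ of $f_0$ with $P(U_0) \subseteq V_0$; shrinking $U_0$ if necessary so that $P$ also satisfies its own estimate there, I get a degree $r$, a base $b$ and constants $C_m$ with $\mn{P(f)}_m \le C_m(1 + \mn{f}_{m+r})$ for $f \in U_0$ and $m \ge b$. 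The point to be careful about is precisely this step: ``tame map'' only requires estimates near each point, so one must use continuity of $P$ to pass to a single common neighborhood $U_0$ before the two estimates can be chained.

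Then for $f \in U_0$ and $n \ge \max(c,\, b-s,\, 0)$ I would insert $g = P(f) \in V_0$ into the estimate for $Q$ and apply the estimate for $P$ at the raised index $m = n+s \ge b$:
\begin{multline}
  \mn{Q(P(f))}_n \le D_n\big(1 + \mn{P(f)}_{n+s}\big) \le D_n\Big(1 + C_{n+s}\big(1 + \mn{f}_{n+s+r}\big)\Big) \\ \le E_n\big(1 + \mn{f}_{n+(r+s)}\big),
\end{multline}
with $E_n = D_n(1 + C_{n+s})$, the last inequality being the elementary bound $1 + C_{n+s} + C_{n+s}t \le (1 + C_{n+s})(1 + t)$ applied with $t = \mn{f}_{n+s+r} \ge 0$. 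This exhibits a tame estimate for $Q \circ P$ of degree $r+s$ and base $\max(c,\, b-s,\, 0)$ on the neighborhood $U_0$ of $f_0$. Since $f_0 \in W$ was arbitrary, $Q \circ P$ is tame.

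I do not expect a genuine obstacle; the argument is bookkeeping. The two things to track are (i) the passage to a common neighborhood noted above, and (ii) the shift of the base to $b - s$ rather than $b$, which arises because the $P$-estimate is invoked at index $n+s$. If one works with maps that are tame in the strong sense (base $0$), the identical computation gives that the composite is tame of degree $r+s$.
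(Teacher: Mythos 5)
Your proof is correct: chaining the two local tame estimates, using continuity of $P$ to pass to a single common neighborhood $U_0$ with $P(U_0)\subseteq V_0$, and shifting the base to $\max(c,\,b-s)$ because the $P$-estimate is invoked at index $n+s$, is exactly the standard argument, and your degree bookkeeping (composite tame of degree $r+s$) is right. The paper itself gives no proof of this theorem but defers to Sections II and III of Hamilton's survey \cite{ham}, where essentially this same chaining argument appears, so your route matches the intended one.
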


The following is the Nash--Moser theorem that we shall use, which is Theorem 1.7.1 in section III of \cite{ham}.

\begin{thm}\label{ThmB.10}
Let $ F $ and $ G $ be tame spaces and $ P:\ F\supseteq U \mapsto G $ a $ C^\infty $  tame map of the open set\ $ U $. If the equation for the derivative $ DP(f)h = k $ has a unique solution $ h = VP(f)k $  for any $ f \in U $ and any $ k $, and the family of inverses $VP(f): \ U \times G \mapsto F  $ are tame maps, then $ P $ is locally invertible and each local inverse is a $ C^\infty $ tame map.

If the derivative $ DP(f)h = k $ has a solution $ h = VP(f)k $ for any $ f \in U $ and any $ k $, and the family of right inverses $VP(f): \ U \times G \mapsto F  $ are tame maps, then $ P $ is locally surjective and each local right inverse is a $ C^\infty $ tame map.

If the derivative $ DP(f) $ is injective for any $ f \in U$ and the family of left inverses $VP(f): \ U \times G \mapsto F  $ are tame maps, then $ P $ is locally injective and each local left inverse is a $ C^\infty $ tame map.
\end{thm}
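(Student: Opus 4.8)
The plan is to obtain the three assertions as the bijective, surjective and injective versions of the Nash--Moser inverse function theorem of \cite[Part~III]{ham}; for completeness I would recall the mechanism and point out which hypotheses enter in each case. Since $ F $ and $ G $ are tame, they carry families of smoothing operators $ S_t $, $ t\ge 1 $, with $ \mn{S_tf}_{n+r}\le C_{n,r}\,t^{r}\mn{f}_n $ and $ \mn{(\id-S_t)f}_n\le C_{n,r}\,t^{-r}\mn{f}_{n+r} $; these, together with the tame estimates for $ P $, $ DP $ and $ VP $ near a base point $ f_0\in U $, are the only inputs.

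Given $ k $ close to $ P(f_0) $ in a fixed seminorm high enough to absorb the loss of derivatives in the iteration, one runs the damped Newton scheme starting from $ f_0 $,
\begin{equation*}
	f_{j+1}=f_j+S_{t_j}\,VP(f_j)\bigl(k-P(f_j)\bigr),
\end{equation*}
with parameters $ t_j $ growing superexponentially. Writing the residual $ e_j=k-P(f_j) $ and expanding $ P(f_{j+1}) $ by Taylor's formula with integral remainder gives $ e_{j+1}=DP(f_j)(\id-S_{t_j})VP(f_j)e_j-Q_j $ with $ Q_j $ the quadratically small remainder; this uses only the identity $ DP(f_j)VP(f_j)=\id_G $, which is exactly what lets the factor $ \id-S_{t_j} $ recover the derivatives lost in $ VP $ and $ DP $. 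The tame estimates then show that the $ e_j $ decay geometrically and that $ (f_j) $ is Cauchy in every seminorm, with limit $ f=f(k) $ solving $ P(f)=k $; the bounds carried along the iteration make $ k\mapsto f(k) $ a $ C^\infty $ tame map. Since only the right-inverse property was used, this already gives the second assertion (local surjectivity with a tame right inverse). For the first assertion one adds local uniqueness, which comes from the two-sided property of $ VP $, so the tame right inverse is in fact a tame local inverse. For the third assertion, in which $ DP(f) $ is injective with a tame left inverse $ VP(f) $, the mirror-image construction produces a tame local left inverse of $ P $ and hence local injectivity; I would simply quote the injective version from \cite[Part~III]{ham} here.

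The delicate point, as always with Nash--Moser, is the convergence of the scheme: the fixed loss of derivatives incurred at each application of $ VP $ must be balanced against the smoothing operators with the superexponentially growing $ t_j $, and the bookkeeping that keeps the accumulated tame constants bounded over infinitely many steps is the core of the argument. As this is carried out in detail in \cite[Part~III]{ham}, in the paper I would confine myself to quoting the relevant statements and to checking that the maps to which it is applied meet their hypotheses.
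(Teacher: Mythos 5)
Your approach matches the paper's: Theorem~\ref{ThmB.10} is Hamilton's Nash--Moser theorem, and the paper offers no proof of it beyond citing Sections II and III of \cite{ham}, which is exactly what you ultimately do. Your sketch of the smoothed Newton iteration, including the residual identity using $DP(f_j)VP(f_j)=\id$ and the role of the two-sided, right- and left-inverse hypotheses in the three respective conclusions, is consistent with Hamilton's argument, so there is nothing to correct.
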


\begin{rem}\label{tamerem}
The local inverse $ P^{-1} $ of $ P $ in Theorem~\ref{ThmB.10} inherits the properties of the linear inverse $ VP $, since $ VP $ is used in the construction of $ P^{-1} $, see Theorem 1.7.1 in section III of~\cite{ham}. For example, if  $ VP $ depends $ C^\infty $ on some parameters then $ P^{-1} $ also depends $ C^\infty $ on these.
\end{rem}

\begin{exe}\label{ExeB.11}
It follows from Proposition~\ref{PropA.2} that if\/ $ \bM(t) $ is a smooth symmetric $N\times N$ system valued function such that $ \bM(0) = 0 $ and $ \partial_t^m \bM \equiv 0$ then the mapping of smooth $N\times N$ system valued functions
\begin{multline}
C_0^\infty(\br)\times C_0^\infty(\br) \ni (Q(t), \bM(t)) \mapsto \\ G(t) = P(t,\bM(t))Q(t) + R(t) \in C_0^\infty(\br)\times C_0^\infty(\br)
\end{multline}
 is a $ C^\infty $  tame surjective map of degree 0 with a local $ C^\infty $  tame right inverse of degree~$3+m$.
\end{exe}

The proofs of these results and more examples and counterexamples can be found in sections II and III in \cite{ham}.

\begin{ack}
	I would like to thank Gregory Berkolaiko at Texas A\&M, who asked me about symmetric preparation theorems like \eqref{sprepconj} which started this project. We then worked on the formal power series solution in the first order case, see \cite[Section 2]{de:symprep}. 
\end{ack}

\bibliographystyle{plain}

\begin{thebibliography}{1}
	
	\bibitem{GB}
	G.\ Berkolaiko,
	\emph{Personal communication.}
	
	\bibitem{de:double}
	N.\ Dencker, \emph{The propagation of polarization in double refraction}, Jour.
	Funct. Anal. \textbf{104} (1992), 414--468.
	
	\bibitem{de:prep}
	N.~Dencker, \emph{Preparation theorems for matrix valued functions}, Ann. Inst.
	Fourier (Grenoble) \textbf{43} (1993), 865--892.
	
	\bibitem{de:symprep}
	N.~Dencker, \emph{Symmetric preparation of systems}, arXiv:2601.12458v2.
	
	\bibitem{ham}  
	R.\ S.\ Hamilton,
	\emph{The inverse function theorem of Nash and Moser}, Bull.\ AMS (N.S.) \textbf{7} (1982), 65--222.
	
	\bibitem{ho:yellow}
	L.\ {H\"{o}rmander}, \emph{The analysis of linear partial differential
		operators}, vol. I--IV, Springer Verlag, Berlin, Heidelberg, New York, Tokyo,
	1983--1985.
	
	\end{thebibliography}

	\end{document}